\title{Flow Cytometry Based State Aggregation\\ of a Stochastic Model of Protein Expression}
\author{Anahita Mirtabatabaei, Francesco Bullo, Mustafa Khammash}
\let\IEEEproof\proof
\let\IEEEendproof\endproof
\let\proof\@undefined
\let\endproof\@undefined
\let\proof\IEEEproof
\let\endproof\IEEEendproof
\newtheorem{theorem}{Theorem}[section]
\newtheorem{proposition}[theorem]{Proposition}
\theoremstyle{definition}
\theoremstyle{remark}
\newtheorem{remark}{Remark}
\newcommand{\ml}{\left[ \begin{array}{ccccccccc}}
\newcommand{\mr}{\end{array} \right]}
\begin{document}

\maketitle

\begin{abstract}
In this article, we introduce the new approach \emph{fluorescence grid based aggregation (FGBA)} to justify a dynamical model of protein expression using experimental fluorescence histograms. In this approach, first, we describe the dynamics of the gene-protein system by a chemical master equation (CME), while the protein production rates are unknown. Then, we aggregate the states of the CME into unknown group sizes.  We show that these unknown values can be replaced by the data from the experimental fluorescence histograms. Consequently, final probability distributions correspond to the experimental fluorescence histograms. 



\end{abstract}
\section{Introduction}

In the study of protein expression, \emph{flow cytometry} is a promising technique for the analysis of protein regulatory system \cite{HL-AO:07,FI-JH-CC-JC:03,BM-BT-MK:09}. 
In a cell colony, flow cytometery measures single cell's fluorescent intensity, which represents the protein concentration, and draws a \emph{fluorescence histogram}. A fluorescence histogram of a cell colony is a plot of the cell count versus measured fluorescent intensity \cite{JVW:91}. 
In theory, the process of protein expression has been stochastically analyzed to generate a probability distribution of protein concentration.
However, there are two deficiencies to this analysis. First, generated probability distributions do not represent the experimental fluorescence histograms, since the relation between fluorescent intensity and protein concentration is unknown. Second, the protein production rate,  which is a key parameter in stochastic analysis of expression, is not known for different expression states of a gene.

In this paper, we study the expression of a protein called \emph{Ag43} by a gene named \emph{agn43}.
This protein is not involved in feedback regulation, and instead the encoding gene uses a mechanism of generating multiple \emph{phases} in order to regulate the protein production. 
Phase variation describes changes in the \emph{expression state} of the gene that results
in mixed cell cultures in a colony \cite{MW-IRH:08}. 
 A gene is called to have an \emph{On}, \emph{Partial}, or \emph{Off} expression state, if it produces protein with a high, low, approximately zero rate, respectively. 
 In the mechanism of agn43 regulation, between phases with On and Off expression states, the gene enters intermediate phases that act as buffers and prevent back and forth switching. 
Recently, Lim et al. (2007) proposed a dynamical model for the phase variation of agn43 and identified a third expression state, Partial, for the gene.
They verified the model deterministically, and computed the phase variation rates of the gene. 
 However, the protein production rates in those three expression states are unknown,  and the dynamics of the protein production is not analyzed.

As our main contribution, we introduce a new approach to justify the dynamical model of gene-protein system by the experimental fluorescence histograms. We call this approach the \emph{fluorescence grid based aggregation (FGBA)}. First, we compute the rate of increase in cell's fluorescent intensity by the steady state histograms. This rate has a linear relation with the protein production rate. Second, assuming that the stochastic dynamics of the gene-protein system is a Markov process,
 we describe this system  by a chemical master equation (CME), while the protein production rates, for different expression states, are unknown. 
Third, we aggregate the states of the CME into groups with unknown sizes, and compute the dynamics of the aggregated system. 
 Aggregation of Markov chains, also known as sparse grid approximation \cite{MH-CB-LS-SM-HB:07} and projection through interpolation \cite{BM-MK:08}, has been employed to the gene regulatory networks in order to reduce the computation time. However, in those studies, the number of states being aggregated and the protein production rates were known, as opposed to our method.
In FGBA method, we aggregate the CME based on the fluorescence grid sizes in experimental fluorescence histograms. 
By employing this method on the CME~\eqref{pdot}, we achieve the following goals: (1) we eliminate the dependence of the CME on protein number, and hence, its dependence on unknown protein production rates; (2) we define CME as a function of fluorescent intensity, 
solving which gives final probability distributions that correspond to the experimental fluorescence histograms; 
and (3) we reduce the size of the differential CME to reduce the computation time. 
Finally, we find an upper bound for the evolution of the error caused by employing FGBA method.


The paper develops as follows. The remainder of this section reviews the studied gene and protein. The deterministic and stochastic analysis of the gene-protein system is discussed in Sections~\ref{deterministic} and \ref{stoch}, respectively. 
The FGBA method and its error are presented in Subsection~\ref{FGBAsec}. Numerical results are provided in Section~\ref{numericalresults}. Finally, some conclusions are drawn in Section~\ref{conclusion}.


\subsection{Gene-Protein System}\label{biology}

\emph{Antigen 43 (Ag43)} is an outer membrane protein in the bacteria \emph{Escherichia coli} and is described as its "most abundant phase varying outer membrane protein'' \cite{IRH-PO:99}. This protein is encoded by a single gene called \emph{agn43} or \emph{flu}. Flu is an abbreviation of fluffing due to the fact that the production of Ag43 causes interspecies cell aggregation by Ag43-Ag43 interaction. Hence, the expression of this protein enhances biofilm formation. As mentioned above, the phase variation of agn43 regulates production of Ag43.
In agn43, phase variation is performed by an \emph{epigenetic switch}. An epigenetic switch can be defined as a heritable yet reversible switch in gene expression state, which is not mediated by a change in DNA sequence \cite{MW-IRH:08}. Therefore, agn43 is a controllable toggle switch. As a practical device, a toggle switch forms a synthetic, addressable cellular memory unit and has implications for biotechnology, biocomputing and gene therapy \cite{TSG-CRC-JJC:00}.

The dynamics of phase variation in agn43 is studied separately by \cite{HL-AO:07} and \cite{MW-IRH:08}. A schematic of the model proposed by \cite{HL-AO:07} is illustrated in Figure~\ref{transcriptionfig}. 
The methylation state of three GATC sequences along the gene  
 decides whether the expression is On (methylated) or Off (unmethylated).
The methylation state of the GATC sites is determined by competitive binding between OxyR, a global oxidative stress protein, and DNA adenine methylase (Dam). 
Since there is no DNA demethylation reaction, gene replication is essential to the phase variation. 
After each replication: fully methylated agn43 ($M_F$), whose expression state is On, becomes hemimethylated ($M_H$); the hemimethylated agn43 generates one hemimethylated and one unmethylated and naked agn43 ($U_N$); and the gene in the rest of phases 
keeps its initial phase. 
In Lim's model, the expression state of $M_H$ is said to be either On or Partial, while we assume this expression to be On, according to the heritable expression state of agn43 \cite{MW:06}. 
 OxyR can bind to  $U_N$ and generate an unmethylated agn43 with OxyR ($U_O$). In Lim's phase variation model, agn43 in $U_N$ and $U_O$ phases partially transcribes protein, as opposed to the model proposed by Marjan et al. (2008). In Lim's model the DNA in $U_O$ phase can undergo a conformational change, giving rise to an Off phase ($O$) with Off expression state.


\section{Deterministic Analysis}\label{deterministic}
The deterministic dynamics of the agn43-Ag43 system can be divided into three parts: gene's phase variation, protein production, and the gene replication during cell division.

\subsection{Dynamics of Phase Variation}\label{phaserate}
We briefly review the dynamics of agn43 phase variation in Lim's model. According to Section~\ref{biology}, five phases and three expression states are assigned to agn43: $M_F$, $M_H$, $U_N$, $U_O$, and $O$ phases with On, On, Partial, Partial, and Off expression states, respectively.
As illustrated in Figure~\ref{transcriptionfig}, the dynamics of these five phases can be written as:
\begin{gather*}
\dot{M}_F(t) = k_M M_H(t), \hspace{.1in} \dot{M_H}(t)  = k_H U_N(t) - k_M M_H(t),\\
\dot{U}_N(t) = k_{-O}U_O(t) - (k_O+k_H)U_N(t), \\
\dot{U}_O(t)  = k_{-R}O(t) + k_O U_N(t) -(k_{-O}+k_R)U_O(t), \\
\dot{O}(t)  = -k_{-R}O(t) +k_R U_O(t).
\end{gather*}
According to the supplementary methods of \cite{HL-AO:07}, $k_M=4.3$, $\frac{k_O}{k_{-O}}=3.7$, and $\frac{k_R}{k_{-R}}=15.8$. 
Based on our sensitivity analysis, we used $k_H=0.4$.
Here, we need two more equalities to compute all the phase varying rates. Owing to rare On-Off switching of the agn43 ($7\times 10^{-3}$ cells per generation), $k_H \ll k_O$, hence we assume that $k_O= 1000 k_H$. Finally, considering the steady state of the system, it can be computed that $k_R=0.118k_O.$
\begin{figure}
  \centering
    \includegraphics[width=2.5in,keepaspectratio]{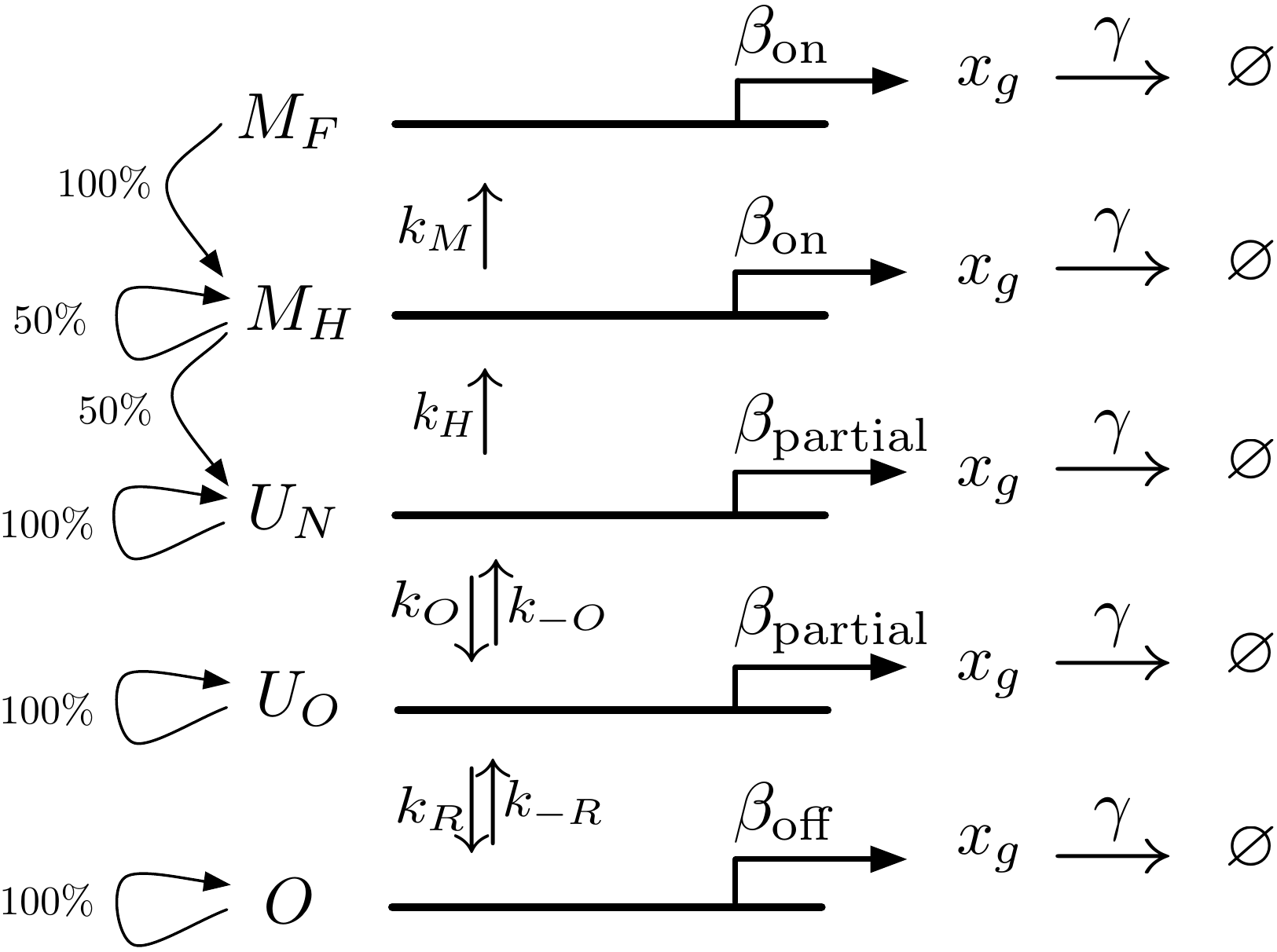}
  \caption{Lim's dynamical model of agn43-Ag43. The gene has five phases $M_F$, $M_H$, $U_N$, $U_O$, and $O$, with On, On, Partial, Partial, and Off expression states, respectively. 
Depending on the expression state, the protein $x_g$ is produced with three different rates $\beta_{\text{on}}$, $\beta_{\text{partial}}$, and $\beta_{\text{off}}$, but degrades with fixed rate $\gamma$.
The arrows on the left represent the effect of replication on the phase of the gene.}
  \label{transcriptionfig}
\end{figure}

\subsection{The Dynamics of the Protein Production}\label{transcription}

A useful function that describes protein production rate in many real genes is \emph{Hill function} \cite{UA:06}. According to this function, in the absence of activator and repressor, the protein production rate is constant. As discussed in Section~\ref{biology}, there is no feedback regulation in the production of Ag43 and the concentration of external factors, OxyR and Dam, during cell growth is constant by over expression. Thus, the dynamics of protein production can be described by
\begin{equation}\label{gfp}
\dot{x}_g(t) = \beta - \gamma x_g(t),
\end{equation}
where $x_g(t)$, $\beta$, and $\gamma$ represent the concentration, production rate, and degradation rate of the reporter protein, respectively. 
In the experiments by \cite{HL-AO:07}, \emph{green fluorescent protein} (GFP) is used as a reporter, and its production is regulated by agn43. GFP exhibits fluorescence in the cell, that can be measured by flow cytometry.
Based on the method of generating and amplifying the expression of GFP in \cite{HL-AO:07}, we assume that there is a linear relation between the rates of Ag43 production and GFP production in the cell. However, their degradation is independent of each other, and the latter is measurable by flow cytometer. Therefore, we consider the dynamics of GFP production to verify the model by experimental results.

The rate $\gamma$ is the sum of \emph{dilution} and \emph{degradation} rates. Dilution is the reduction of protein density due to increase in cell volume. Since a flow cytometer measures the total fluorescence of a cell rather than the density of fluorescence,
the dilution rate is zero here. Degradation rate is computed by protein's half life $\tau$ while its production rate is zero. That is, $x_g(\tau)= x_g(0)/2=x_g(0) e^{- \gamma \tau},$ and thus $\gamma= \ln{2}/\tau$.
Half life of wild type GFP is 26 hours \cite{PC-CT:99}, and one generation takes 85 minutes \cite{HL-AO:07}, therefore, $\gamma$ is equal to $0.0378$ protein per generation. 

The protein production rate $\beta$ depends on the expression state of the gene, On, Partial, or Off.
Consider a gene that remains in one expression state as time goes to infinity. Then, the protein concentration of the cell reaches a steady state $x_{g,\infty}$, and thus $\lim_{t \rightarrow \infty}\dot{x}_{g}(t)=0$. It follows from equation~\eqref{gfp} that $\beta = \gamma x_{g,\infty}$ protein per generation. Our tool to compute $x_{g,\infty}$ is the experimental fluorescence histogram, e.g., Figure~\ref{histogramsample}. 
However, for each expression state, such histogram gives us the fluorescent intensity of a cell in steady state in arbitrary units (a.u.) instead of the protein concentration. 

In a cell, the fluorescent intensity $x_f$  depends linearly on protein (GFP) concentration, see \cite{JA-HC-TK-IO-CC-GH-CP-DK:05} and \cite{MS-JO-WH:05}.  That is, $x_f(t) = \mu x_g(t)$, where we call $\mu$ the \emph{fluorescence-GFP ratio}, and its value unknown. 
Taking the derivative of both sides gives
\begin{equation}\label{linearf}
\dot{x}_f(t) = \mu (\beta - \gamma x_g(t)) = \mu \beta - \gamma x_f(t) = \beta_f - \gamma x_f(t),
\end{equation}
where $\beta_f$ denotes the rate of increase in fluorescent intensity of the cell.
According to Figure~\ref{histogramsample}, the steady state fluorescent intensity $x_{f,\infty}$ of a cell  whose agn43 has On, Partial, or Off expression state is $10^{3.5}$, $10^{1.8}$, or $10$ a.u., respectively.
It follows from $\beta_f = \gamma x_{f,\infty}$ that $\beta_{f,\text{on}} = 238$, $\beta_{f,\text{partial}} = 3$, and $\beta_{f,\text{off}} = 0.37$ a.u. per generation. 
\begin{figure}
  \centering
    \includegraphics[width=2.6in,keepaspectratio]{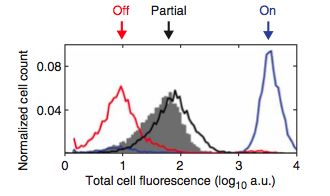}
  \caption{ Three fluorescence histograms of cell colonies in three different expression states after 20 hours. 
This plot tells us that the steady state fluorescent intensity of a cell whose agn43 has On, Partial, or Off expression state is $10^{3.5}$, $10^{1.8}$, or $10$ a.u., respectively.
 Reprinted figure with permission from \cite{HL-AO:07}. \copyright 2007, by Nature Publishing Group.} 
  \label{histogramsample}
\end{figure}

\subsection{Replication Rates}\label{replication}

Replication of the cell has two effects in our model. First, we assume that the protein concentration of the cell becomes half of its initial value. This assumption is based on two reasons: "in immunofluorescence studies of Ag43-producing E. coli, the protein is seen evenly distributed over the surface of the entire cell'' \cite{IH-MM-PO:97}; and, in our stochastic analysis we have observed that
employing binomial distribution for protein concentration after replication has a negligible effect on the final probability distribution, see Figure~\ref{discrete}.
Second, after replication the gene's phase vary: any $M_F$ gene becomes $M_H$; half of $M_H$ genes become $U_N$, and the other half remain $M_H$; and genes in the rest of phases keep their initial phase, see Figure~\ref{transcriptionfig}.

\section{Stochastic Analysis}\label{stoch}
We aim to describe the dynamics of the protein expression by the phase varying gene agn43 by a Markovian process. In other words, we compute the probability of a cell being in any \emph{configuration}, which is here determined by its gene's phase plus its protein concentration, as a function of time.
Therefore, a cell's configuration changes based on: (1) phase variation rates, (2) protein production and degradation rates, and (3) replication rates, see Figure~\ref{states}.
\begin{figure}
  \centering
    \includegraphics[width=2.4in,keepaspectratio]{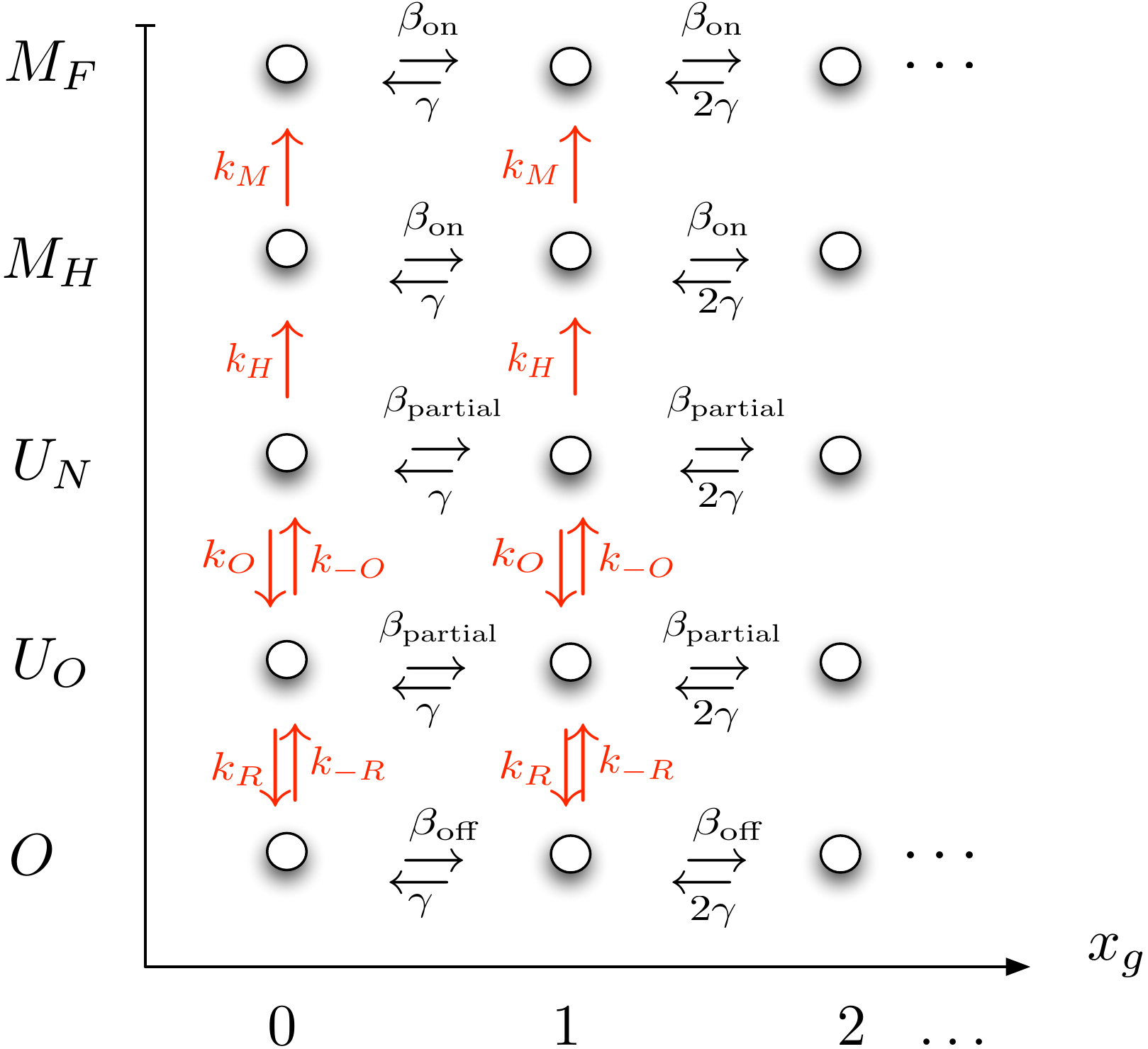}
  \caption{Each circle represents one possible configuration for a cell that contains agn43, based on the cell's protein concentration (horizontal axis) and its gene's phase (vertical axis). 
The transitions between configurations, shown by arrows, is possible through phase variation (red arrows) or change in protein concentration (black arrows). For brevity, the effect of cell replication on protein concentration is not illustrated.}
  \label{states}
\end{figure} 
For each cell, the probability of having any such configuration is a function of time, and the union of those probabilities makes up the probability distribution vector $P(t)$. More specifically, the first five entries of $P(t)$ represent the probability of a cell having no protein and a gene with $M_F$, $M_H$, $U_N$, $U_O$, and $O$ phases, respectively; the second five entries represent the probability of the cell having one protein and a gene in mentioned phases; and so on.
This probability vector evolves according to a continuous-time Markov process, which is called the chemical master equation (CME):
\begin{equation}\label{pdot}
\dot{P}(t)= A P(t) + D P(t),
\end{equation}
where the \emph{transition matrix} $A$ contains phase varying rates, and protein production and degradation rates, and $D$ is the  \emph{replication matrix}. According to the system's deterministic dynamics, we compute the building blocks of the transition matrix, i.e.,   phase variation matrix $K$ and protein production matrix $B$:
\begin{gather*}
K = \ml 0  & k_M & 0 & 0 &0\\
              0 &  -k_M & k_H & 0 & 0 \\
              0 & 0 &  - k_H -k_O & k_{-O} &0 \\
              0 & 0 & k_O &   - k_{-O} - k_R & k_{-R}\\
              0 & 0 & 0 & k_R &  - k_{-R} \mr, \\
B = \ml \beta_{\text{on}} & 0 & 0 &0 & 0 \\
          0 & \beta_{\text{on}}  & 0 & 0 &0 \\
          0 & 0 & \beta_{\text{partial}}  & 0 & 0 \\
           0 & 0 & 0 &  \beta_{\text{partial}} & 0\\
           0 & 0 & 0 &0 &  \beta_{\text{off}} \mr.
\end{gather*}
If we denote the identity matrix of size five by $I_5$, then
\begin{equation}\label{transitionmatrix}
A = \ml K-B & \gamma  I_5 & 0 &  \dots\\
            B & K-B - \gamma  I_5 & 2\gamma  I_5 & 0\\
            0 & B & K-B - 2\gamma  I_5 & 3\gamma I _5\\
           \vdots & &  \ddots & \\
 \mr.
\end{equation}
After replication, as mentioned in Section~\ref{replication}, any configuration transforms into another configuration with half protein concentration. Hence, the replication matrix can be written as $D = - I + D^+$. The negative identity matrix represents a continuous reduction in the probability of all configurations due to reduction in protein concentration. 
 The $D^+$ matrix contains the information on phase change and is composed of the blocks 
\begin{equation}\label{Dplus}
 D^+_{i,j}=\ml 0 &0&0&0&0 \\
                 1&0.5& 0 &0&0\\
                 0&0.5 &1 &0&0\\
                 0&0& 0 &1&0\\
                 0&0&0&0&1
\mr,\end{equation}
where the protein concentration of the $i$th five configurations is approximately half of that of the $j$th five configurations. 
Note that in computing the rates, one unit time is equal to one generation or the time between two replications.


\subsection{Fluorescence Grid Based Aggregation}\label{FGBAsec}


Aggregation or lumping of Markov chains has been known for a long time \cite{JGK-JLS:76}. 
Here, we aggregate the states of Markov chain $\dot{P}(t) = A P(t)$  into groups of $m_1, m_2, \dots$ states by a linear \emph{aggregation operator} $E$:
\begin{equation*}
 E = \ml \overbrace{1  \dots 1}^{m_1} & 0\dots  & & \\
                        0 \dots 0 & \overbrace{1 \dots 1}^{m_2} & 0  & \dots \\
&&&&\\
\vdots && \ddots &&
\mr.
\end{equation*}
Therefore, the aggregated probability vector at time $t$, is equal to
$P_{agg}(t) = E P(t).$
Taking the derivative of both sides gives 
$\dot{P}_{agg}(t) = E \dot{P}(t) = E A P(t).$
To find the dynamics of $P_{agg}(t)$ independent of $P(t)$, we define $P(t)$ as an approximate function of $P_{agg}(t)$. 
We assume that the probability of being in state $i$ is equal to the aggregated probability of being in the group that contains $i$ divided by the number of states in that group, that is,
\begin{equation}\label{Pagg}
P(t) \simeq F P_{agg}(t),
\end{equation}
where $F$ is the \emph{disaggregation operator} and 
\begin{equation*}
F = \ml   
\left.\begin{array}{c}
  \frac{1}{m_1} \\
  \vdots\\
  \frac{1}{m_1}\\ 
 \end{array} \right\} m_1  & \begin{array}{c} 0\\ \vdots \\ 0 \end{array}  & & \dots   \\
   \begin{array}{c} 0\\ \vdots \\ 0 \end{array}  & 
\left.\begin{array}{c}
       \frac{1}{m_2} \\
       \vdots\\
       \frac{1}{m_2}\\ 
       \end{array} \right\} m_2  &  \begin{array}{c} 0\\ \vdots \\ 0 \end{array}   & \dots  \\
  \vdots  & & \ddots &
\mr.
\end{equation*}
Now, consider the following \emph{approximated aggregated Markov chain} 
\begin{eqnarray}\label{Pa}
&& P_a(0) = P_{agg}(0) = E P(0), \nonumber\\
&& \dot{P}_a(t) = EAF P_a(t). 
\end{eqnarray}
Based on assumption~\eqref{Pagg}, the evolution of the solution $P_a(t)$ can approximate the evolution of the aggregated probability vector $P_{agg}(t)$.

\begin{remark}[Properties of aggregation]
First, our linear aggregation method is not \emph{lumpable} \cite{JGK-JLS:76}, or \emph{unbiased} \cite{ECH-CRJ:89}: a Markov chain is lumpable with respect to an aggregation if the transitions and states inside any partition group also compose a Markov chain. The class of Markov chains which admits this exact aggregation was investigated in \cite{JGK-JLS:76} and proved to be quite narrow. 
In our aggregation method the necessary and sufficient condition for lumpability of the CME is $CBAC = AC$. It is easy to see that this equality does not always hold, and thus our aggregation is not lumpable. Second, our aggregation is \emph{regular} \cite{ECH-CRJ:89}: an aggregation is regular if it is both linear and \emph{state partitioning}. Being state partitioning means that the aggregator should assign each state of Markov process to be aggregated to exactly one super state. It can be easily checked that our aggregation is state partitioning.
\end{remark}

Before proceeding to the theorem, define fluorescence rate matrix $B_f$ to be equal to a protein production matrix $B$ whose entries (e.g., $\beta_{\text{on}}$) are replaced by the corresponding fluorescence based production rates (e.g., $\beta_{f, \text{on}}$), see Section~\ref{transcription}.

\begin{theorem}[FGBA algorithm]\label{FGBA}
Consider a gene-protein system that can be described by the CME $\dot{P}(t) = A P(t)$, where the transition matrix $A$ is given by equation~\eqref{transitionmatrix}. From the experimental fluorescence histograms, extract the fluorescence grids $\Delta_1, \Delta_2, \dots$ and the fluorescence rate matrix $B_f$. Then the solution to the following fluorescence based CME simulates the experimental fluorescence histogram:
\begin{equation}\label{fCME}
\begin{array}{l}
\dot{P}_f(t)= A_f P_f(t), \\
A_f = \\
\ml K -\frac{1}{\Delta_1}B_f &  0  &  &  & \dots\\
            \frac{1}{\Delta_1}B_f    & K -\frac{1}{\Delta_2}B_f   & 0 &  & \\
            0                     & \frac{1}{\Delta_2}B_f   & K-\frac{1}{\Delta_3}B_f   & 0& \\
           \vdots & &   & \ddots \\
 \mr\\
+ \ml 0 &  \frac{\gamma \Delta_1}{\Delta_2} I_5  & 0 &  \dots \\
            0   &  - \frac{\gamma \Delta_1}{\Delta_2} I_5  & \frac{\gamma(\Delta_1+\Delta_2)}{\Delta_3} I_5   & 0\\
            0     & 0 &  - \frac{\gamma(\Delta_1+\Delta_2)}{\Delta_3} I_5  & \frac{\gamma(\Delta_1+\Delta_2+\Delta_3)}{\Delta_4} I_5 & 0 \\
           \vdots & &   & \ddots &\\
 \mr,
\end{array}
\end{equation}
where $P_f(0)$ is computed based on the initial state of the system in the experiments.
\end{theorem}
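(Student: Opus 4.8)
The plan is to recognize the fluorescence-based generator $A_f$ of~\eqref{fCME} as the approximated aggregated generator $EAF$ of~\eqref{Pa} for one particular choice of aggregation, and then read off the theorem from the approximation property~\eqref{Pagg}. Since $x_f=\mu x_g$, a cell with protein level $n$ has fluorescent intensity $\mu n$, so the $k$-th histogram bin $[\Delta_1+\cdots+\Delta_{k-1},\,\Delta_1+\cdots+\Delta_k)$ collects the protein levels $n$ with $\Delta_1+\cdots+\Delta_{k-1}\le\mu n<\Delta_1+\cdots+\Delta_k$, and there are $m_k=\Delta_k/\mu$ of them. I therefore take the aggregation of Subsection~\ref{FGBAsec} in which group $k$ is exactly the set of these $m_k$ consecutive ``protein-blocks'' of five phases; write $G_k$ for the corresponding set of block indices and $s_k=\min G_k$. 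The operators $E$ and $F$ are applied blockwise, i.e.\ each unit entry of $E$ is replaced by $I_5$ and each $1/m_k$ in $F$ by $\frac{1}{m_k}I_5$, so the five phases are never mixed. Two bookkeeping identities are used repeatedly: $\mu(s_k-1)=\Delta_1+\cdots+\Delta_{k-1}$ (fluorescent intensity at the lower edge of bin $k$), and $B_f=\mu B$, which is immediate from~\eqref{linearf} since $\beta_{f,\text{on}}=\mu\beta_{\text{on}}$ and similarly for Partial and Off.

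The core step is the block-by-block evaluation of $EAF$. By~\eqref{transitionmatrix}, $A$ is block-tridiagonal in the protein index with diagonal blocks $K-B-(p-1)\gamma I_5$, constant sub-diagonal (production) blocks $B$, and super-diagonal (degradation) block $(p-1)\gamma I_5$ in position $(p-1,p)$. In $(EAF)_{k\ell}=\frac{1}{m_\ell}\sum_{i\in G_k}\sum_{j\in G_\ell}A_{ij}$, because each $G_k$ is an interval of consecutive protein-blocks the two off-diagonal group-blocks each pick up exactly one entry of $A$: the production block $B$ at the $G_k$--$G_{k-1}$ interface, and the degradation block $(s_{k+1}-1)\gamma I_5$ at the $G_k$--$G_{k+1}$ interface. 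In the diagonal group-block the running degradation coefficients telescope against the interior super-diagonal blocks, leaving only the boundary term $-\gamma(s_k-1)I_5$, while the $m_k-1$ interior production blocks combine with the $-m_kB$ from the diagonal into a single $-B$; thus $\sum_{i,j\in G_k}A_{ij}=m_kK-B-\gamma(s_k-1)I_5$. Dividing by $m_\ell$ and substituting $m_k=\Delta_k/\mu$, $\mu(s_k-1)=\Delta_1+\cdots+\Delta_{k-1}$ and $B=\mu^{-1}B_f$ replaces the production rate $B/m_\ell$ by $B_f/\Delta_\ell$, the diagonal coefficient $\gamma(s_k-1)/m_k$ by $\gamma(\Delta_1+\cdots+\Delta_{k-1})/\Delta_k$, and the super-diagonal coefficient $\gamma(s_{k+1}-1)/m_{k+1}$ by $\gamma(\Delta_1+\cdots+\Delta_k)/\Delta_{k+1}$ --- precisely the nonzero blocks, with the signs as displayed, of the two matrices whose sum is $A_f$ in~\eqref{fCME}. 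Hence $EAF=A_f$, and by uniqueness for the linear ODE the solution $P_f$ of~\eqref{fCME} with $P_f(0)=EP(0)$ (the experimental initial distribution aggregated into the bins, which is the stated $P_f(0)$) coincides with the solution $P_a$ of~\eqref{Pa}.

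To finish, I would invoke~\eqref{Pagg}: $P_a(t)$ approximates $P_{agg}(t)=EP(t)$, which is the true CME solution regrouped into the experimental fluorescence bins, i.e.\ the (normalized) cell-count-versus-intensity histogram that flow cytometry reports; so solving~\eqref{fCME} reproduces that histogram. I expect the only real obstacle to be careful bookkeeping rather than depth: getting the telescoping of the degradation coefficients and the single surviving production block right at the group interfaces, and making the passage from $m_k$ to $\Delta_k/\mu$ precise --- in particular handling the fact that $\Delta_k/\mu$ need not be an integer, which forces the grid to be read off the histograms so that each bin contains a whole number of protein levels (the residual discrepancy being controlled by the separate error estimate). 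The point to emphasize is that although $\mu$ and the production rates $\beta_{\text{on}},\beta_{\text{partial}},\beta_{\text{off}}$ are individually unknown, they enter $EAF$ only through the computable combinations $B_f/\Delta_k$ and $\gamma(\Delta_1+\cdots+\Delta_k)/\Delta_{k+1}$, which is exactly what makes~\eqref{fCME} solvable and what secures goals (1)--(3).
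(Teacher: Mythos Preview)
Your proposal is correct and follows essentially the same route as the paper: blockwise aggregation $\overline{E}A\overline{F}$ to obtain the intermediate generator in terms of the unknown group sizes $m_k$ and production matrix $B$, followed by the substitution $\mu m_k\simeq\Delta_k$, $B_f=\mu B$ to eliminate the unknowns and arrive at $A_f$. Your telescoping bookkeeping for the diagonal and interface blocks is more explicit than the paper's, and your handling of the non-integrality of $\Delta_k/\mu$ (deferring it to the separate error bound) matches the paper's treatment via equation~\eqref{deltam}.
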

\begin{remark}
In Theorem~\ref{FGBA}, the phase variation matrix $K$ can be any arbitrary matrix with zero column sum. 
\end{remark}
\begin{proof}[Proof of Theorem~\ref{FGBA}]
First, we aggregate the states of the initial CME $\dot{P}(t) = A P(t)$ by lumping the configurations with different protein numbers but same phase, that is, the configurations along the x-axis of Figure~\ref{states}. 
Therefore, in above mentioned aggregation (disaggregation) operator, each entry $E_{ij}$ ($F_{ij}$) is replaced by a five by five block $E_{ij} I_5$ ($F_{ij} I_5$), and we denote the new aggregation (disaggregation) operator by $\overline{E}$ ($\overline{F}$).
Employing these operators,  the dynamics of the approximated aggregated CME will be:
\begin{equation}\label{approximatedCME}
\begin{array}{l}
P_a(0) = \overline{E} P(0)\\
\dot{P}_a(t)= A_a P_a(t), \\
A_a = \overline{E} A \overline{F} \\
=   \ml K -\frac{1}{m_1}B &  0  &  &  & \dots\\
            \frac{1}{m_1}B    & K -\frac{1}{m_2}B   & 0 &  & \\
            0                     & \frac{1}{m_2}B   & K-\frac{1}{m_3}B   & 0& \\
           \vdots & &   & \ddots \\
 \mr\\
+ \ml 0 &  \frac{\gamma m_1}{m_2} I_5  & 0 &  \dots \\
            0   &  - \frac{\gamma m_1}{m_2} I_5  & \frac{\gamma(m_1+m_2)}{m_3} I_5   & 0\\
            0     & 0 &  - \frac{\gamma(m_1+m_2)}{m_3} I_5  & \frac{\gamma(m_1+m_2+m_3)}{m_4} I_5 & 0 \\
           \vdots & &   & \ddots &\\
 \mr.\\
\end{array}
\end{equation}
In essence, the $(5i)$th entry of $P_a(t)$ represents the probability of having a protein concentration between $m_1+\dots+m_{i-1}$ and $m_1+\dots + m_i$ proteins at time $t$. Notice that the \emph{group sizes} $m_i$ and the protein production rates in $B$ are unknown.
Now, assume that for  $i =1, 2, \dots$, the each group size $m_i$ satisfies
\begin{equation}\label{deltam}
m_i = \max\{x_g \in \mathbb{N} | \hspace{.1in} \mu x_g \le \Delta_i\},
\end{equation}
where $x_g$ is the protein number and $\mu$ is the fluorescence-GFP ratio, defined in Section~\ref{transcription}. Roughly speaking, $m_i$ is the number of proteins in one cell that increases the fluorescent intensity of the cell by $\Delta_i$. Since for the experimental fluorescence grids in histograms of \cite{HL-AO:07}, $m_i$'s tend to be large, one can see that $\mu m_i \simeq \Delta_i$.
 Then, according to equation~\eqref{linearf},
$$\frac{\beta_*}{m_i} \simeq \frac{\beta_{f, *}/\mu}{\Delta_i /\mu} = \frac{\beta_{f, *}}{\Delta_i}.$$
Moreover, for any $i, j, k \in \{1, 2, \dots \}$, $$\frac{m_i+m_j}{m_k} \simeq \frac{(\Delta_i+\Delta_j)/\mu}{\Delta_k/\mu} = \frac{\Delta_i+\Delta_j}{\Delta_k}$$
Therefore, the fluorescence based CME~\eqref{fCME} is a direct consequence of approximated aggregated CME~\eqref{approximatedCME} under assumption $\mu m_i =\Delta_i$, and the unknown values $\mu$ and $m_i$'s are eliminated.
Note that the $i$th entry of $P_f(t)$ is now the probability of cell having fluorescent intensity between $\Delta_1+\dots+\Delta_{i-1}$ and $\Delta_1+\dots+\Delta_i$. 
\end{proof}

\begin{proposition}[Evolution of error in FGBA method]
Consider the dynamics of a gene-protein system with only one gene phase, hence one protein production rate $\beta$, is described by the CME $dot{P}(t) = A P(t)$. By employing the FGBA method, the system's dynamics can be approximated by the fluorescence based CME $\dot{P}_f(t)= A_f P_f(t)$. Assume that: 
\begin{enumerate}
\item\label{cond1} there exists $r \in \mathbb{R}_{> 0}$ such that the fluorescence grids satisfy 
$\Delta_i \le r \Delta_{i-1}$; and
\item\label{cond2} there exists $\epsilon \in \mathbb{R}_{> 0}$ such that the group sizes satisfy $|\mu m_i - \Delta_i| \le \epsilon$.
\end{enumerate}
Let $e(t)$ denote the error in the expected value of the final probability distribution, that is,
$$e(t) = \mu E[P(t)] - E[P_f(t)],$$ 
then $e(t)$ can be upper bounded by a well defined function of $E[P_f(t)]$, $\epsilon$, $r$, and the minimum and maximum fluorescence grid.
\end{proposition}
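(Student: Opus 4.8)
The plan is to collapse the infinite-dimensional comparison to a one-dimensional linear differential inequality for $e(t)$ and then integrate it. First I would write the exact first-moment equation for the true chain: with a single phase, $\dot P(t)=AP(t)$ is a birth--death process with constant birth rate $\beta$ and death rate $n\gamma$ in state $n$, so left-multiplying the CME by $(0,1,2,\dots)$ and using $\mu\beta=\beta_f$ from Section~\ref{transcription} gives, for $n(t):=\mu\,E[P(t)]$,
\begin{equation*}
\dot n(t)=\beta_f-\gamma\,n(t),
\end{equation*}
with no approximation (termwise differentiation of the moment series is legitimate because $P(t)$ has exponentially light tails). Next I would do the same for the fluorescence CME: with one phase $K=0$ and $B_f=\beta_f$, so $A_f$ in~\eqref{fCME} is the scalar tridiagonal generator whose up-rate out of bin $i$ is $\beta_f/\Delta_i$ and whose down-rate out of bin $i$ is $\gamma\big(\sum_{j<i}\Delta_j\big)/\Delta_i$. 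Attaching to bin $i$ the fluorescence value $v_i:=\sum_{j<i}\Delta_j$ and setting $m_f(t):=E[P_f(t)]=\sum_i v_i\,[P_f(t)]_i$, the telescoping identities $v_{i+1}-v_i=\Delta_i$ and $v_i-v_{i-1}=\Delta_{i-1}$ collapse $\sum_i v_i\,(A_f^{\top}v)_i$ to
\begin{equation*}
\dot m_f(t)=\beta_f-\gamma\sum_i [P_f(t)]_i\,v_i\,\frac{\Delta_{i-1}}{\Delta_i}
\end{equation*}
(a different bin representative changes this by at most an additive $O(\Delta_{\max})$, absorbed into the final bound).

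Subtracting the two displays and adding and subtracting $\gamma m_f(t)$ gives the scalar error equation
\begin{equation*}
\dot e(t)=-\gamma\,e(t)+\gamma\,R(t),\qquad R(t):=\sum_i[P_f(t)]_i\,v_i\Big(\frac{\Delta_{i-1}}{\Delta_i}-1\Big).
\end{equation*}
Using condition~\ref{cond1} together with $\Delta_{\min}\le\Delta_i\le\Delta_{\max}$ I would bound $\big|\frac{\Delta_{i-1}}{\Delta_i}-1\big|\le\rho$ for an explicit $\rho=\rho(r,\Delta_{\min},\Delta_{\max})$ (say $\rho=(\Delta_{\max}-\Delta_{\min})/\Delta_{\min}$, with condition~\ref{cond1} controlling the opposite tail by $1-1/r$); since $v_i\ge0$ and $P_f(t)$ is a probability vector this yields $|R(t)|\le\rho\sum_i[P_f(t)]_i v_i=\rho\,E[P_f(t)]$. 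For the initial mismatch I would invoke condition~\ref{cond2}: the bin edges $\sum_{j\le i}\Delta_j$ deviate from the integer-protein levels $\mu\sum_{j\le i}m_j$ by at most $\epsilon$ per bin and at most $O\!\big(E[P_f(0)]/\Delta_{\min}\big)$ bins lie below the mean, so $|e(0)|\le c_1\,\epsilon\,E[P_f(0)]/\Delta_{\min}+c_2\,\Delta_{\max}$. The variation-of-constants formula then gives
\begin{equation*}
|e(t)|\le e^{-\gamma t}|e(0)|+\gamma\rho\int_0^t e^{-\gamma(t-s)}E[P_f(s)]\,ds\le e^{-\gamma t}|e(0)|+\rho\,\big(1-e^{-\gamma t}\big)\max_{s\le t}E[P_f(s)],
\end{equation*}
and since the first moment of the fluorescence CME converges monotonically to its steady value $x_{f,\infty}$ one may replace $\max_{s\le t}E[P_f(s)]$ by $\max\{E[P_f(0)],E[P_f(t)]\}$ (or by $x_{f,\infty}$), producing the asserted bound as an explicit function of $E[P_f(t)]$, $\epsilon$, $r$, $\Delta_{\min}$ and $\Delta_{\max}$.

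The step I expect to be the real obstacle is the $\epsilon$-bookkeeping: the identification $\mu m_i\simeq\Delta_i$ underlies both the passage from the approximated aggregated CME~\eqref{approximatedCME} to~\eqref{fCME} and the reinterpretation of a bin index as a fluorescence level, and one must show the per-bin error $\epsilon$ accumulates no worse than linearly in the number of occupied bins; relatedly, one should check that passing through the disaggregation ansatz~\eqref{Pagg} injects no extra first-moment error beyond the grid-non-uniformity term $R(t)$ already captured (for a uniform integer grid all these errors vanish, which is the sanity check I would lean on). The remaining points — justifying termwise differentiation of the moment series, and upgrading the running maximum of $E[P_f(\cdot)]$ to a bound in $E[P_f(t)]$ alone via monotone convergence of the fluorescence CME's mean — are routine once the main estimate is in place.
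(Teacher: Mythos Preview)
Your argument is sound and shares the paper's skeleton—reduce to a scalar linear ODE for the first-moment error and integrate by variation of constants—but the decomposition is genuinely different. The paper routes the comparison through the intermediate aggregated chain $P_a(t)$ of~\eqref{Pa}, splitting $e(t)=\mu e_1(t)+e_2(t)$ with $e_1=E[P]-E[P_a]$ and $e_2=\mu E[P_a]-E[P_f]$; it derives an ODE for $e_1$ whose residual is the $m$-analogue of your $R(t)$ (entries $(m_1+\cdots+m_{i-1})(1-m_{i-1}/m_i)$ against $P_a$), bounds $e_1(0)\le m_{\max}\le(\Delta_{\max}+\epsilon)/\mu$, and handles $e_2$ by invoking condition~\ref{cond2} together with an asserted (not constructed) scalar $g(\epsilon,t)$ satisfying $e^{A_a t}\le g(\epsilon,t)\,e^{A_f t}$. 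Your direct route bypasses $P_a$ entirely: because the true birth--death first moment obeys $\dot n=\beta_f-\gamma n$ \emph{exactly}, condition~\ref{cond2} is needed only for $e(0)$, your residual $R(t)$ involves only the $\Delta_i$'s, and the matrix-exponential comparison device is never needed. This buys you a cleaner, decaying estimate (your sign $\dot e=-\gamma e+\gamma R$ is the correct one; the paper's displayed ODE for $e_1$ carries $+\gamma e_1$ and hence a growing $e^{\gamma t}$ factor in the integrated bound). What the paper's two-stage split buys in return is an explicit attribution of the error to lumping ($e_1$) versus the $\mu m_i\simeq\Delta_i$ replacement ($e_2$), which your merged bound does not separate.
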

\begin{proof}
This error in FGBA method is caused by two reasons: aggregating the states of initial CME; and approximating the group sizes $m_i$ by fluorescence grid sizes, instead of employing the exact equation~\eqref{deltam}. Therefore,
\begin{multline*}
e(t) = \mu E[P(t)] - E[P_f(t)] \\
= (\mu E[P(t)] - \mu E[P_a(t)]) + (\mu E[P_a(t)] - E[P_f(t)]) \\
= \mu e_1(t) + e_2(t).
\end{multline*}
We first compute the first term's upper bound:
\begin{multline*}
e_1(t) = E[P(t)] - E[P_a(t)] \\
= [0 \;\; 1\;\;2 \; \dots ] P(t) - [0 \;\;  m_1 \;\; m_1+m_2 \; \dots ] P_a(t).
\end{multline*}
Taking the derivative of both sides gives
\begin{multline*}
\dot{e}_1(t) = [ 0 \;\; 1 \; \dots ] A P(t) \\ -  [0 \;\;  m_1 \;\; m_1+m_2 \;\; \dots ] A_{a} P_{a}(t),
\end{multline*}
where $A_a = E A F$, and $E$ and $F$ are the aggregation and disaggregation operators introduced in Section~\ref{FGBAsec}.
Hence,
\begin{multline*}
 \dot{e}_1(t) = [\beta \;\; \beta-\gamma \;\; \beta-2\gamma \;\;  \dots ]P(t) \\- [\beta \;\; \beta- \frac{m_1^2}{m_2}\gamma \;\;  \beta- \frac{m_2(m_1+m_2)}{m_3}\gamma \;\; \dots ]P_{a}(t) \\ 
= \beta {\bf 1}^T  P(t) - \gamma [ 0 \;\; 1 \;\; 2 \; \dots] P(t) \\
 - \beta {\bf 1}^T  P_{a}(t)  + \gamma [0 \;\; \frac{m_1^2}{m_2} \;\; \frac{m_2(m_1+m_2)}{m_3} \dots] P_{a}(t).  
\end{multline*}
Clearly, ${\bf 1}^T  P_{a}(t) = {\bf 1}^T  P(t) =1$. By adding and subtracting $\gamma e_1(t)$ we have
\begin{multline*}
\dot{e}_1(t) =  \gamma e_1(t)\\ + \gamma [0 \;\; \frac{m_1^2}{m_2}-m_1 \;\; \frac{m_2(m_1+m_2)}{m_3} - (m_1+m_2) \; \dots] P_{a}(t).  
\end{multline*}
Integrating from 0 to $t$ gives
\begin{multline*}
e_1(t) = e^{\gamma t} e_1(0) - \int_0^t e^{\gamma(t-\tau)} \gamma \\ [ 0 \;\; \frac{m_1(m_2-m_1)}{m_2} \;\; \frac{(m_1+m_2)(m_3-m_2)}{m_3}  \; \dots \; ] P_{a}(\tau) d\tau. 
\end{multline*}
According to the initial value $P_a(0) = E P(0)$ we have
\begin{eqnarray*}
e_1(0)  &=& [0 \;\; 1\;\;2 \; \dots ] P(0) - [0 \;\;  m_1 \;\; m_1+m_2 \; \dots ] E P(0) \\
 &=& P_2(0) + \dots + (m_1-1) P_{m_1}(0) \\
&&+ P_{m_1+1}(0) + \dots + (m_2-1) P_{m_2}(0)\\
 &&+ \dots.
\end{eqnarray*}
Clearly, $e_1(0)$ is a convex combination of $\{ 1, 2, \dots, m_{max} \}$, where $m_{max}$ is the maximum group size. Hence, denoting the maximum fluorescence grid by $\Delta_{max}$,
$$e_1(0) \le m_{max} \le (\Delta_{max} +\epsilon)/\mu.$$
Therefore, 
\begin{multline*}
e_1(t) \le  \frac{\Delta_{max} +\epsilon}{\mu} e^{\gamma t}  -  \max_i\{\frac{m_i - m_{i-1}}{m_i}\} \\\int_0^t e^{\gamma(t-\tau)} \gamma 
[0 \;\;  m_1 \;\; m_1+m_2 \; \dots \;] P_{a}(\tau) d\tau. 
\end{multline*}
According to the assumptions~\ref{cond1} and \ref{cond2}, the value $\max_i\{\frac{m_i - m_{i-1}}{m_i}\} $ can be upper bounded by $1 - \Delta_{min} /(r \Delta_{min} + r\epsilon + \epsilon)$, which we denote by $\hat{r}$:
\begin{equation*}
e_1(t) \le  \frac{\Delta_{max} +\epsilon}{\mu} e^{\gamma t}  +  \hat{r} (1- e^{-\gamma t}) \gamma E[P_a(t)].
\end{equation*}
Second, we compute the upper bound on error $e_2(t)$:
\begin{multline*}
e_2(t) = \mu [0 \;\;  m_1 \;\; m_1+m_2 \;\; \dots \;]P_{a}(t) \\- [0\;\; \Delta_1 \;\; \Delta_1+\Delta_2 \;\; \dots\;]P_f(t)\\
\le [0\;\; \Delta_1 \;\; \Delta_1+\Delta_2 \;\; \dots\;]P_{a}(t) + \epsilon{\bf 1}P_{a}(t) \\
 - [0\;\; \Delta_1 \;\; \Delta_1+\Delta_2 \;\; \dots\;]P_f(t).
\end{multline*}
The value $ \epsilon{\bf 1}P_{a}(t) $ is equal to $\epsilon$, and using assumption~\ref{cond2}, one can compute the scalar function $g(\epsilon,t)$ such that $ e^{A_a t} \le g(\epsilon,t) e^{A_f t}$, then
\begin{equation*}
P_{a}(t) = e^{A_a t}P_a(0) \le g(\epsilon,t) e^{A_at} P_f(0) = g(\epsilon,t) P_f(t).
\end{equation*}
Consequently, 
\begin{equation*}
e_2(t) \le (g(\epsilon,t) - 1) E[P_f(t)] + \epsilon,
\end{equation*}
and finally
\begin{multline*}
\mu e_1(t) + e_2(t)\le  (\Delta_{max} +\epsilon)e^{\gamma t}  \\
+ \Big( \hat{r} (1- e^{-\gamma t}) \gamma +g(\epsilon,t) + \epsilon\Big)  E[P_f(t)].
\end{multline*}
\end{proof}

\section{Numerical Results}\label{numericalresults}

In the experiments done by Lim et al., they let six separate colonies of E. coli grow for 20 hours. Each colony started from a cell that contains a \emph{mutant} of agn43 with Off expression state. The gene was mutated by deleting different parts of the upstream sequences of agn43. They claimed that the only difference in the dynamics of gene-protein system in these mutants is the ratio $k_R/k_{-R}$, see Figure~\ref{transcriptionfig}. According to the steady state of phase varying dynamics, the ratio $k_R/k_{-R}$ is equal to the fraction of unmethylated cells with Off expression, and is experimentally found to be $15.8, 8.9, 5.5, 4.3, 1,$ and $0.1$ for the six mutants. 
Finally, they measured the fluorescence of the cells in each colony with flow cytometer and drew fluorescence histograms, see Figure~\ref{niz}.(a). In these histograms, the fluorescence grids $\Delta_i$ are equal to $10^i - 10^{i-1}$, where $i\in 0.05\{0, 2, \dots, 40\}$.

Now, to generate analytical fluorescence histograms, we employ the FGBA method stated in Theorem~\ref{FGBA} to the gene-protein systems of the mutants of agn43 (each system has one of the six mentioned values for $k_R/k_{-R}$, and the rest of parameters remains constant.) Knowing the phase variation rates, Section~\ref{phaserate}; degradation and fluorescence increase rates, Section~\ref{transcription}; and fluorescence grids, the fluorescence based transition matrix $A_f$ of equation~\eqref{fCME} can be computed. Therefore,
\begin{equation}\label{finalCME}
\dot{P}_f(t) = A_f P_f(t) + D_f P_f(t),
\end{equation}
where $D_f = -I + D^+_f$ is the fluorescence based replication matrix. The $ D^+_f$ is composed of blocks $D^+_{i,j}$, given by equation~\eqref{Dplus}, while the fluorescence of the $i$th five configurations is approximately half of the $j$th five configurations.
\begin{remark}
In Lim's model, the replication of the gene is described by a discrete time reaction. Accordingly, we first employed a discrete time replication in our stochastic analysis. However, the variance of the resulting probability distribution did not match the variance of the experimental fluorescence histograms, see Figure~\ref{discrete}. This disagreement can be explain as follows: The experimental histograms are taken from a colony of the cells, and in a colony not all the cells replicate at the same time. Therefore, a continuous time replication can better capture the behavior of a large number of cells than a discrete time replication. 
\end{remark}

Equation~\eqref{finalCME} is an infinite dimensional ODE, hence we truncate this equation into a finite dimensional equation. 
The finite dimensional CME should contain configurations whose protein concentration is between zero and the maximum number of proteins in one cell, or equivalently, configurations whose fluorescence is less than the maximum value observed ($10^4$ a.u.). The solutions to the final CME's for the six mentioned mutants are plotted in Figure~\ref{niz}.(b).



\begin{figure}
  \centering
    \includegraphics[width=3.5in,keepaspectratio]{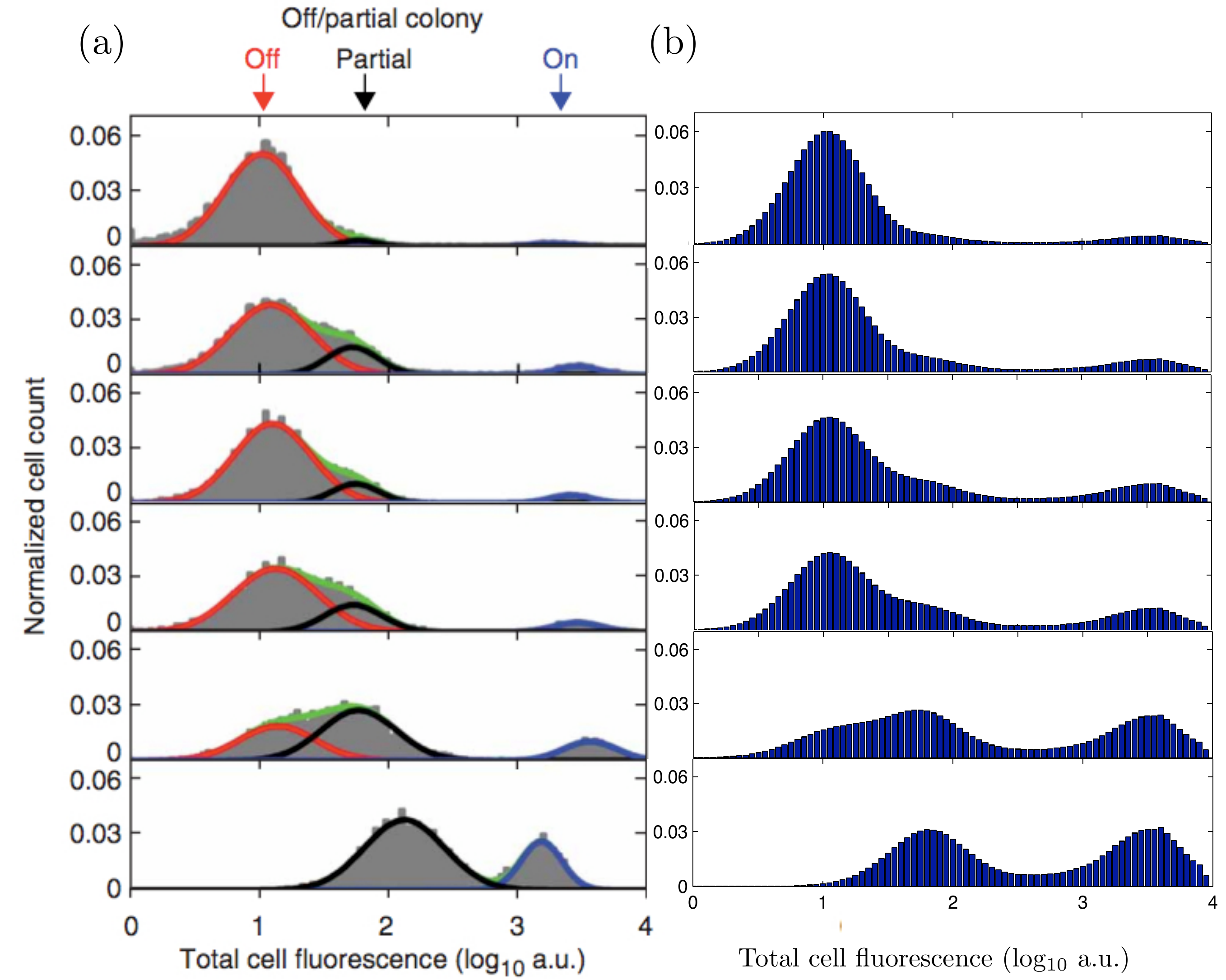}
  \caption{(a) The fluorescence histograms for six separate colonies, each starting from a mutant of agn43 with Off expression state. Lim et al. claims that the only difference in the dynamics of these mutants is the ratio $\frac{ k_R}{k_{-R}}$, which is equal to  
$15.8, 8.9, 5.5, 4.3, 1,$ and $0.1$ from top to bottom. Reprinted figure with permission from \cite{HL-AO:07}. \copyright 2007, by Nature Publishing Group. (b) The probability distribution of fluorescent intensity resulting from solving the aggregated CME of equation~\eqref{pdot}. Each plot is obtained by solving the model with one of the six mentioned values for $\frac{ k_R}{k_{-R}}$, and the rest of parameters remains constant. These plots proves that our method can make a phase varying model verifiable by the fluorescence histograms.}  \label{niz}
\end{figure}
\begin{figure}
  \centering
    \includegraphics[width=2.5in,keepaspectratio]{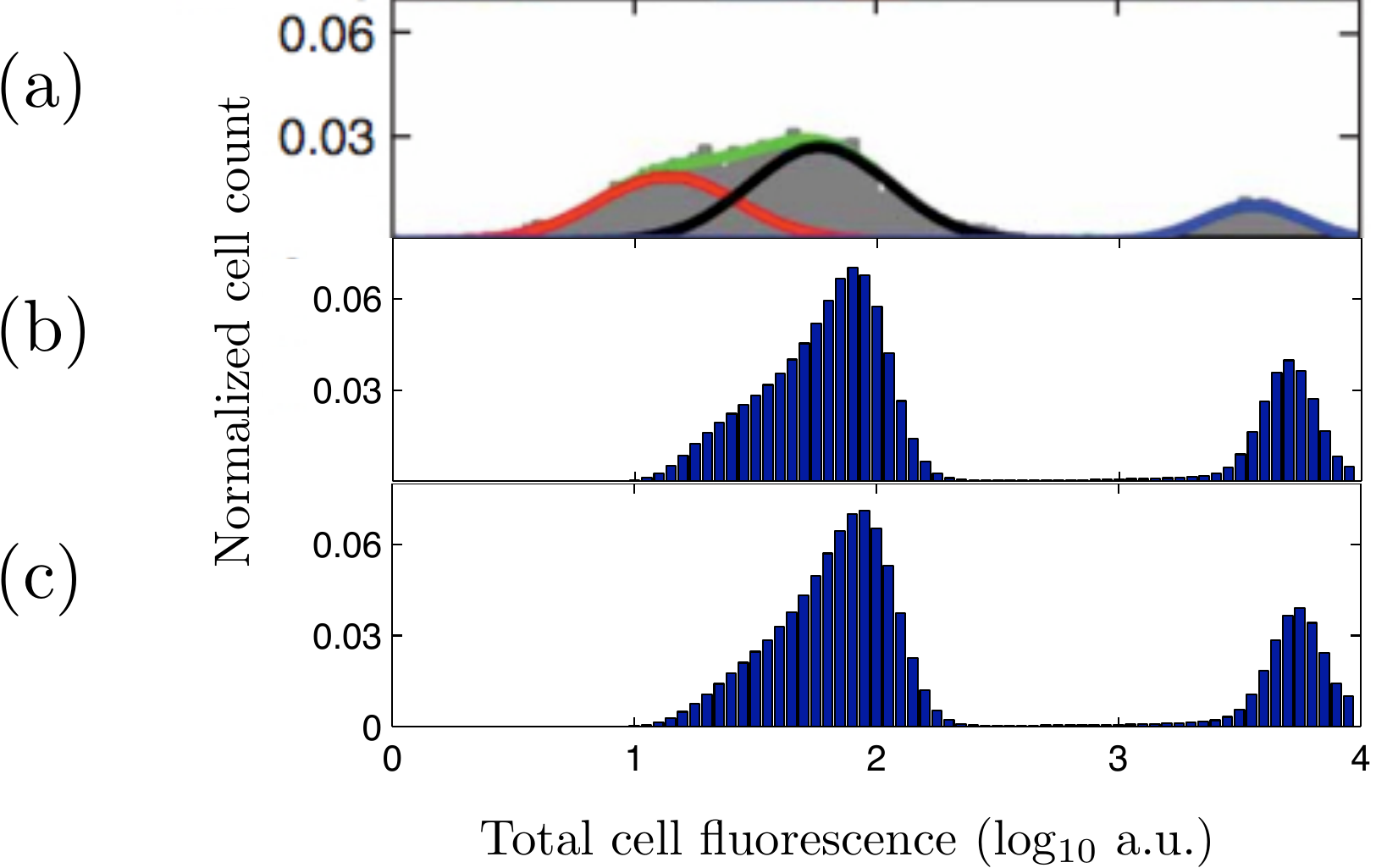}
  \caption{(a) The experimental fluorescence histogram of a colony that starts from a mutant of agn43 with Off expression state and the ratio $\frac{ k_R}{k_{-R}} = 1$. Reprinted figure with permission from \cite{HL-AO:07}. \copyright 2007, by Nature Publishing Group. 
(b)  The probability distribution of fluorescent intensity resulting from solving the fluorescence based CME with discrete time replications for the same mutant. We assumed that the fluorescence becomes half in each replication.
(c)   The probability distribution of fluorescent intensity resulting from solving the fluorescence based CME with discrete time replications for the same mutant. Here, the replication matrix is constructed by a binomial probability distribution, in order to increase the resulting variance. This figure tells us that a model with discrete time replication can not capture the variance of the experimental  fluorescent intensity distributions. Moreover, employing a binomial probability distribution for replication only slightly increases this variance.}
  \label{discrete}
\end{figure}

\section{Conclusion and Future Work}\label{conclusion}

 As our main result, we introduced a new approach to justify the dynamical model of protein expression by the experimental fluorescence histograms. 
We described the dynamics of a gene-protein system, whose protein production rates are unknown, with a chemical master equation (CME).
Based on the resolution of the experimental histograms, we aggregated  the states of the CME, however, the number of states in each aggregated group is also unknown.
We proved that the unknown protein production rates and number of states in one group can be replaced by the fluorescence increase rate and the fluorescence grids from the histograms, respectively.
Therefore, the final probability distribution is the theoretical fluorescent histogram of the gene-protein model, and can be verified by the experimental fluorescence histograms.
One future challenge is to compute the parameters of a gene-protein system via its fluorescence histograms. 
The solution to the CME, which is a probability distribution, has been numerically approximated from the parameters of the CME,  see \cite{JZ-TL-AC-CY:10}. A reverse analysis of this method can help us find the parameters of a gene-protein system by experimental fluorescence histograms.

\section{Acknowledgments}
 The authors would like to thank Marjan Van der Woude, Jo{\~a}o Hespanha, Brian Munsky, and Sandra Dandach  for 
their helpful comments and encouragements. 

\bibliographystyle{plain}
\bibliography{../bioref,../../ref/alias,../../ref/FB,../../ref/Main,../../ref/New}

\end{document}